\newtheorem{thm}{Theorem}[section]
\newtheorem{lem}{Lemma}[section]
\newtheorem{rem}{Remark}[section]
\newcommand{\N}{\mathbb{N}}
\def\ps@pprintTitle{%
 \let\@oddhead\@empty
 \let\@evenhead\@empty
 \def\@oddfoot{\centerline{\thepage}}%
 \let\@evenfoot\@oddfoot}
\begin{document}


\begin{center}
{\bf {New bounds on the number of Edges of the maximal Permutation Graphs}}
\end{center}
\begin{center}
{\bf M. Anwar* \footnote {$``<{\rm mohamedanwar} \ @\ {\rm sci.asu.edu.eg}>''$}}
, Mahmoud Tarek*\footnote {$``<{\rm }_{-}{\rm } \ @\ {\rm sci.asu.edu.eg}>''$}
, Ahmed Gaber* \footnote{$``<{\rm a.gaber} \ @\ {\rm sci.asu.edu.eg}>$}\\
{\footnotesize *Department of Mathematics, Faculty of Science, Ain Shams University, 11566 Abbassia, Cairo, Egypt.}
\end{center}

\begin{abstract}
In this paper, we bound the number of edges of a maximal permutation graph with n vertices. We propose a new method to compute the lower bound by splitting the set of labellings of the edges into six parts, considering one separate problem for each part, explicitly determining the cardinality of four parts and summing up the corresponding values. We finish with an upper bound of the number of edges of a maximal permutation graph.
\end{abstract}

\textbf{2020 Mathematics Subject Classification: 05C78, 05C90.}\\

\textbf{Keywords}:permutation graphs, maximal permutation graphs, graph labeling.

\section{Introduction}

A graph labeling, informally, is an assignment of integers to the vertices or edges, or both, subject to specified constraints. Graph labelings were first introduced in the mid 1960s. In the intervening years over 200 graph labelings techniques have been studied in over 3000 papers. Most graph labeling methods trace their origin to one introduced by Rosa [7] in 1967, or one given by Graham and Sloane [4] in 1980. The seminal survey by Gallian [3] is a basic reference devoted to collect a large number of known labelings of graphs.\\

The theory of labelings of graphs has a wide range of applications, for instance, x-ray, crystallography, coding theory, cryptography, data base management, radar , astronomy, circuit design, communication design and Convolution Codes with optimal autocorrelation properties. See [1, 2, 6, ?].\\

The permutation labeling of graphs was first studied by Hegde and Shetty in [5]. They defined a graph $G$ with $p$ vertices and $q$ edges to be a permutation graph if there exists a bijection function $f : V (G) \rightarrow \{1, 2, \dots, p\}$ such that the induced edge function $h_f : E(G) \rightarrow\N$ is defined as follows
\begin{equation*}
  h_f(x_ix_j)=\left\{
                \begin{array}{ll}
                  P^{f(x_i)}_{f(x_j)}, &  \ \hbox{if $f(x_i)>f(x_j)$;} \\
                  P^{f(x_j)}_{f(x_i)}, &  \ \hbox{if $f(x_j)>f(x_i)$.}
                \end{array}
              \right.
\end{equation*}

In other words, a simple graph $G$ with $n$ vertices is called a permutation graph if its vertices can be labeled with distinct integers $1, 2, ..., n$ such that when each edge $xy$, where $y > x$, is labeled with $P_x^y$ , the induced edge labels will be distinct. They proved that the complete graph $K_n$ is a permutation graph if and only if $n \leq 5$ . Many well known families of graphs are known to be permutation graphs, see Anwar, for example. A permutation graph $G$ is said to be a maximal permutation graph if adding any new edge breaks the permutability of $G$, see [8]. We note that there may be many non-isomorphic maximal permutation graphs of the same number of vertices. For example, consider the two graphs\\

\begin{center}

\begin{frame}

\begin{tikzpicture}
  [scale=0.7,auto=center,every node/.style={circle,fill=blue!20}]
  \node (v1) at (0,0)    {$1$};
  \node (v2) at (2,0)    {$2$};
  \node (v3) at (3,2)    {$3$};
  \node (v4) at (2,4)    {$4$};
  \node (v5) at (0,4)    {$5$};
  \node (v6) at (-1,2)   {$6$};
  \draw (v1) -- (v2);
  \draw (v1) -- (v3);
  \draw (v1) -- (v4);
  \draw (v1) -- (v5);

  \draw (v2) -- (v3);
  \draw (v2) -- (v4);
  \draw (v2) -- (v5);
  \draw (v2) -- (v6);

  \draw (v3) -- (v4);
  \draw (v3) -- (v5);

  \draw (v4) -- (v5);
  \draw (v4) -- (v6);

  \draw (v5) -- (v6);
\end{tikzpicture}
\hspace{0.2cm}
\begin{tikzpicture}
  [scale=0.7,auto=center,every node/.style={circle,fill=blue!20}]
  \node (v1) at (0,0)    {$1$};
  \node (v2) at (2,0)    {$2$};
  \node (v3) at (3,2)    {$3$};
  \node (v4) at (2,4)    {$4$};
  \node (v5) at (0,4)    {$5$};
  \node (v6) at (-1,2)   {$6$};
  \draw (v1) -- (v2);
  \draw (v1) -- (v3);
  \draw (v1) -- (v4);
  \draw (v1) -- (v5);

  \draw (v2) -- (v3);
  \draw (v2) -- (v4);
  \draw (v2) -- (v5);
  \draw (v2) -- (v6);

  \draw (v3) -- (v4);
  \draw (v3) -- (v5);
  \draw (v3) -- (v6);

  \draw (v4) -- (v6);

  \draw (v5) -- (v6);
\end{tikzpicture}
\captionof{figure}{}\label{figure3}
\end{frame}
 \end{center}
Clearly, the two graphs are non-isomorphic maximal permutation graphs.\\

The aim of this paper is twofold. First, we investigate the least number of edges in a maximal permutation graph. To this end, we establish, in section 2, many lemmata that relates prime numbers and permutations. In section 3, we obtain our main result concerning a lower bound of the number of edges in a maximal permutation graph. Second, in section 4, we obtain an upper bound for the number of edges in a maximal permutation graph. We round off ...

\section{Prime numbers and permutations}

In this section, we prove some lemmas that paves the way to establish our main result.
\begin{lem}
  If $ m! < n $, then $P_{r}^{h} < P_{k-m}^{k} , P_{k-m+1}^{k}, \dots , P_{k-1}^{k}$  for all $ r \leq h < k$.
\end{lem}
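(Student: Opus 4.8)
The plan is to reduce the statement to a single factorial inequality. I use the standard convention $P_{a}^{b} = b!/(b-a)! = b(b-1)\cdots(b-a+1)$, so that $P_{a}^{b}$ is a product of $a$ consecutive integers with largest factor $b$, and is defined exactly when $a \le b$ (which is why the hypothesis requires $r \le h$).

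First I would bound the left-hand side from above, uniformly in $r$ and $h$. For a fixed upper index $h$ one has $P_{r}^{h}/P_{r-1}^{h} = h-r+1 \ge 1$ whenever $r \le h$, so $r \mapsto P_{r}^{h}$ is non-decreasing on $\{1,\dots,h\}$ and hence $P_{r}^{h} \le P_{h}^{h} = h!$. Since $h < k$ gives $h \le k-1$ and the factorial is increasing, this produces the uniform estimate $P_{r}^{h} \le (k-1)!$ for all $r \le h < k$. Next I would single out the smallest target on the right: writing the targets as $P_{k-i}^{k}$ for $i = 1,\dots,m$, a one-line computation gives $P_{k-i}^{k} = k!/i!$, which is strictly decreasing in $i$, so the minimum of $P_{k-m}^{k}, P_{k-m+1}^{k},\dots,P_{k-1}^{k}$ is $P_{k-m}^{k} = k!/m!$. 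Thus it suffices to prove the single inequality $(k-1)! < k!/m!$, after which the chain $P_{r}^{h} \le (k-1)! < k!/m! = P_{k-m}^{k} \le P_{k-i}^{k}$ settles all the targets at once.

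Finally, multiplying $(k-1)! < k!/m!$ through by $m!$ and cancelling $(k-1)!$ turns it into $m! < k$, which is precisely the hypothesis (with $k$ playing the role of the label bound $n$). Along the way I would also record the harmless non-degeneracy point that $m! < k$ forces $m < k$, so $k-m \ge 1$ and every symbol $P_{k-i}^{k}$ above is legitimately defined. The only step carrying real content is the first one — the monotonicity of $P_{r}^{h}$ in its lower index and the resulting clean bound $(k-1)!$; everything else is bookkeeping. I do not anticipate a genuine obstacle, but some care is needed to state the monotonicity over the correct index range and to keep the inequality strict, which it is as long as $m! < k$ (rather than merely $m! \le k$).
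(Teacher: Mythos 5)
Your proposal is correct and follows essentially the same route as the paper: bound $P_{r}^{h}$ above by $(k-1)!$, identify $P_{k-m}^{k}=k!/m!$ as the smallest target, and observe that $(k-1)!<k!/m!$ is exactly the hypothesis $m!<k$ (the paper writes this as $m!\,(m+1)\cdots(k-1)<k\,(m+1)\cdots(k-1)$ and uses $P_{k-2}^{k-1}=(k-1)!$ as the intermediate quantity). Your version is slightly more carefully argued, in particular in noting that the statement's ``$m!<n$'' must be read as $m!<k$, which the paper's own proof also silently assumes.
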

\begin{proof}
  Since $m! < k$, it is clear that $m![(m + 1)(m + 2)\dots(k - 1)] < k[(m + 1)(m +2)\dots(n - 1)]$. Therefore, 
  $P_{r}^{h} \leq P_{k-2}^{k-1} < P_{k-m}^{k}$ for all $r\leq h< k$.
\end{proof}

\begin{lem}
  Let $q$ be a prime number. If $q > s + \sqrt[]{s + 1}, \ s\geq 2$ , then $P^{q+1}_s\neq P^r_k$ for all $r > s$.
\end{lem}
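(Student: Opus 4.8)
The plan is to work $q$-adically. First I would observe that
$P^{q+1}_s=(q+1)\,q\,(q-1)\cdots(q-s+2)$ is a product of $s$ consecutive integers, all positive since $q>s+\sqrt{s+1}>s$ forces $q-s+2\ge 1$; among these the only multiple of $q$ is $q$ itself (the remaining factors lie in $\{1,\dots,q-1\}\cup\{q+1\}$), so $v_q\!\left(P^{q+1}_s\right)=1$, where $v_q$ denotes the $q$-adic valuation. I then assume for contradiction that $P^{q+1}_s=P^r_k=r(r-1)\cdots(r-k+1)$ with $r>s$; this is a product of $k\le r$ consecutive positive integers, so the equality forces $v_q(P^r_k)=1$ as well. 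In particular some factor is a positive multiple of $q$, whence $r\ge q$, and moreover exactly one factor is divisible by $q$ and that factor is divisible by $q$ only to the first power.

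Next I would split on the value of $r$. If $r=q$, cancel the common factor $q$ from $q(q-1)\cdots(q-k+1)=(q+1)q(q-1)\cdots(q-s+2)$ to obtain the identity $P^{q-1}_{k-1}=(q+1)\,P^{q-1}_{s-2}$. When $k\le s-1$ the left side divides $P^{q-1}_{s-2}$, so the right side is at least $q+1$ times the left side, which is impossible. When $k\ge s$, cancelling $P^{q-1}_{s-2}$ leaves a product of $k-s+1$ consecutive integers with largest term $q-s+1$, and this product must equal $q+1$; for $k=s$ this says $q-s+1=q+1$, and for $k\ge s+1$ the product is at least $(q-s+1)(q-s)=(q-s)^2+(q-s)$, which by the hypothesis $(q-s)^2>s+1$ exceeds $(s+1)+(q-s)=q+1$, again impossible. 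This is precisely the step in which the bound $q>s+\sqrt{s+1}$ is used, and it is exactly this bound that makes it work. If $r=q+1$, then $P^{q+1}_k=P^{q+1}_s$, and since $x\mapsto P^{q+1}_x$ is strictly increasing on $\{1,\dots,q+1\}$ we must have $k=s$, i.e. $P^r_k$ is literally the label $P^{q+1}_s$ rather than a different one.

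The remaining case $r\ge q+2$ is the one I expect to be the main obstacle. Here one must first bound $r$ from above: from $P^r_k=P^{q+1}_s<(q+1)q^{s-1}<q^{s+1}$, together with the fact that when the factor of $P^r_k$ divisible by $q$ is $q$ itself the factors include all of $q,q+1,\dots,r$, one gets $q^{\,r-q+1}\le P^r_k<q^{s+1}$ and hence $r\le q+s-1$; the sub-case in which that factor is $aq$ with $a\ge 2$ is handled by noting that $r\ge 2q$ makes $P^r_k$ too large once $k\ge 2$, while $k=1$ gives $r=P^{q+1}_s$, which lies outside the range of vertex labels in the intended application. Within the bounded range $q+2\le r\le q+s-1$ one is then left with finitely many possibilities, and using that $q\mid r$ or $q\mid r-1$ reduces each of them to a small Diophantine equation such as $q(a^2-q)=\pm(a\mp 1)$, which has no solution for $q\ge 5$. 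In the setting where this lemma is applied — $r$ being the larger endpoint of an edge of a graph on vertex set $\{1,\dots,q+1\}$, or more generally whenever $r\le q+1$ — this last case does not occur at all, and the first two cases already supply the contradiction.
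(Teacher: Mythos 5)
Your cases ``largest factor $<q$'', ``$=q$'', and ``$=q+1$'' are correct and essentially reproduce the paper's own argument: the valuation $v_q(P^{q+1}_s)=1$ eliminates tops below $q$, and for top $=q$ the cancellation leading to $q+1=(q-s+1)\cdots(q-r+1)\ge (q-s+1)(q-s)$ is exactly the paper's central computation. Your way of closing that case, $(q-s+1)(q-s)=(q-s)^2+(q-s)>(s+1)+(q-s)=q+1$ from $(q-s)^2>s+1$, is in fact a slightly cleaner use of the hypothesis $q>s+\sqrt{s+1}$ than the paper's explicit root-finding for $x^2-2sx+(s^2-s-1)$.

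The gap is the case you yourself flag, top $\ge q+2$, and it is genuine as written. First, the claim that ``$r\ge 2q$ makes $P^r_k$ too large once $k\ge 2$'' is false: for $s\ge 3$ one has $P^{q+1}_s\ge (q+1)q(q-1)=q^3-q$, which far exceeds $r(r-1)\approx 4q^2$, so a two-factor product with $r\approx q^{3/2}$ is not excluded by size, and the promised reduction to ``a small Diophantine equation'' is never carried out. Second, and more importantly, this entire case is an artifact of attaching the hypothesis $r>s$ to the wrong index. In the paper's proof (and in the definition of $S_2(n)$) the competing label is $P^k_r=k(k-1)\cdots(k-r+1)$ with $r$ the \emph{subscript} --- the number of factors, i.e.\ the smaller endpoint of the competing edge --- and it is that subscript which must exceed $s$; the ``$P^r_k$'' in the statement is a notational slip. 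Under that reading, top $\ge q+1$ together with subscript $\ge s+1$ gives $P^k_r\ge P^{q+1}_{s+1}>P^{q+1}_s$ immediately, which is the paper's one-line disposal of this case. Under your literal reading the lemma is actually false (take $k=1$ and $r=P^{q+1}_s>s$, so that $P^r_1=r=P^{q+1}_s$), which is a strong signal that the quantifier was meant the other way; retreating to ``this case does not occur in the intended application'' does not prove the statement as you construed it.
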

\begin{proof}
 Let $q$ be a prime number and $r > s$. Clearly, $P^{q+1}_s\neq P^r_k$ in the case of $k > q$. Now, Suppose by contrary that $P^{q+1}_s = P^r_k$ . If $n < q$, then $P^k_r \not\equiv 0(\mod q)$ and since $P^{q+1}_s \equiv 0(q)$ for all $s \geq 2$, therefore $q\mid P^{q+1}_s = P^k_r$ which is a contradiction.If $k = q$, then $P^{q+1}_s = P^q_r$ . Hence, $(q + 1)\dots (q - s+ 2) = (q)\dots(q - s + 1)(q - s)\dots (q - r + 1)$. Consequently, $q+1 = (q-s+1)...(q-r+1)$. In particular, $q+1\geq(q-s+1)(q-s)$.Hence, $q^2 - 2sq + (s^2 - s - 1) \leq 0$. Since, the zeros of $f(x)=x^2 - 2sx + (s^2 -s-1)$ are $s\pm \sqrt{s + 1}$ and $f(x)$ is increasing for all $x \geq s$, therefore $f(x)>0$ for all $x > s + \sqrt[]{s + 1}$ which is a contradiction and the proof is complete.
\end{proof}

\begin{lem}
   Let $q$ be a prime number such that $q > 4s$, $s \geq 3$. Then $P^{q+2}_s \neq P^k_r$ for all $r > s$.
 \end{lem}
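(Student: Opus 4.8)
The plan is to mimic the proof of Lemma~2.2 (the case $P^{q+1}_{s}$), now with one more top factor. Suppose, towards a contradiction, that $P^{q+2}_{s}=P^{k}_{r}$ for some $r>s$ and some $k$. Since $q>4s$ and $s\geq 3$ we have $q\geq 13$ and $q-s+3>0$, so $P^{q+2}_{s}=(q+2)(q+1)q(q-1)\cdots(q-s+3)$ is a product of $s\geq 3$ positive integers; hence $P^{q+2}_{s}>0$, and because $s\geq 3$ the integer $q$ is one of these factors, so $q\mid P^{q+2}_{s}$. If $k<q$, then either $P^{k}_{r}=0$, contradicting $P^{q+2}_{s}>0$, or all factors of $P^{k}_{r}$ lie in $\{1,\dots,q-1\}$, so $q\nmid P^{k}_{r}$, again a contradiction. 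Thus $k\geq q$, and I would split the remaining range into $k\geq q+3$ and $k\in\{q,q+1,q+2\}$.

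For $k\geq q+3$: if $r>k$ then $P^{k}_{r}=0\neq P^{q+2}_{s}$, so assume $r\leq k$. As $r\geq s+1$ and $k\geq q+3$, one readily gets $P^{k}_{r}\geq P^{k}_{s+1}\geq P^{q+3}_{s+1}$ (the first step since the extra factors are $\geq 1$, the second factor by factor); but $P^{q+3}_{s+1}=(q+3)\,P^{q+2}_{s}$, so $P^{k}_{r}\geq (q+3)\,P^{q+2}_{s}>P^{q+2}_{s}$, a contradiction.

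For $k\in\{q,q+1,q+2\}$, set $c=q+2-k\in\{0,1,2\}$ and (as above) assume $r\leq k$. Since $s\geq 3\geq c+1$, the products $P^{q+2}_{s}$ and $P^{k}_{r}$ share the $s-c$ common consecutive factors $q-s+3,\dots,q-c+2$; cancelling these, the equation $P^{q+2}_{s}=P^{k}_{r}$ becomes
\[
\prod_{i=0}^{c-1}(q+2-i)\;=\;\prod_{i=0}^{\,r-s+c-1}(q-s+2-i),
\]
whose left side is a product of $c$ integers, each $\leq q+2$, and whose right side is a product of $r-s+c\geq c+1$ consecutive positive integers with largest factor $q-s+2$. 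Thus the left side is $\leq (q+2)^{c}$ while the right side is $\geq\prod_{i=0}^{c}(q-s+2-i)$, so it suffices to verify $\prod_{i=0}^{c}(q-s+2-i)>(q+2)^{c}$ for $c=0,1,2$. Since $q>4s$ gives $q-s>\tfrac34 q$, this amounts to $q-s+2>1$ for $c=0$, is implied by $(q-s)^{2}>q+2$ for $c=1$, and is implied by $(q-s)^{3}>(q+2)^{2}$ for $c=2$, each of which holds for $q\geq 13$; this contradiction finishes the argument.

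All the inequalities above are elementary; the only point where the hypothesis is needed in its sharp form $q>4s$ rather than merely ``$q$ large'' is the subcase $c=2$, i.e.\ $(q-s)^{3}>(q+2)^{2}$. I expect the block $k\in\{q,q+1,q+2\}$ to be the main obstacle: unlike in Lemma~2.2 one has $P^{q+2}_{s}>P^{q+1}_{s}$, so the case $k=q+1$ cannot be dispatched by the crude bound $P^{k}_{s}>P^{q+2}_{s}$, and one is forced to compare against $P^{k}_{s+1}$ or, equivalently, to cancel the common factors and reduce to a polynomial inequality — precisely the device already used for $k=q$ in the proof of Lemma~2.2.
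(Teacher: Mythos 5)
Your proof is correct and follows essentially the same route as the paper's: rule out $k<q$ by divisibility by the prime $q$, rule out large $k$ by size, and reduce the cases $k\in\{q,q+1,q+2\}$ to a polynomial inequality by cancelling the common consecutive factors. The only differences are cosmetic improvements: you unify the three boundary cases with the parameter $c$ and replace the paper's explicit cubic $f(x)=x^3-(3s-2)x^2-(s^2+1)x+(s^3-3s^2+2s-2)$ (analyzed via $f'(x)>0$ and $f(4s)>0$) by the cruder bound $(q-s)^{c+1}>(q+2)^{c}$, and you write out the $k=q+1$ case that the paper dispatches by reference to the previous lemma.
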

\begin{proof}
Let $q$ be a prime number and $r > s \geq 3$. Clearly, $P^{q+2}_s < P^k_r$ in the case of $n > q + 1$ and $P^{q+2}_s \neq P^k_r$ in the case of $k < q$ because of $P^k_r (\mod q)$ and $Pq+2 s \equiv 0( \mod q)$. 
Now, Suppose by contrary that $P^{q+2}_s = P^k_r$. If $k = q + 1$, using the same argument that has been used in lemma 3.3, one can get a contradiction. If $k = q$, then $P^{q+1}_s = P^q_r$ which implies that $(q + 2)(q + 1)\geq(q - s + 2)(q - s + 1)(q - s)$, that is $q^3 - (3s-2) q^2 - (s^2+1) q +(s^3 -3s^2 +2s-2) \leq 0$. Now, consider $f(x)=x^3 - (3s-2)x^2 - (s^2+1)x + (s^3 - 3s^2 + 2s - 2) \leq 0$. Easily, one can prove that $\acute{f}(x) > 0$ for all $x \geq 4s$ and $f(4s) = 13s^3 + 29s^2 - 2s - 2 > 0$ for all $s \geq 3$. Therefore, $f(x) > 0$ for all $x \geq 4s$. Which is a contradiction.
\end{proof}

\begin{lem}
  For any prime $q$ and for all $l$, $k \in\N$, we have $P^{q^mk+ql-1}_{ql-1}, \ P^{q^m(k+l)-1}_{ql-1} \neq P^h_r$ for all $r \geq ql$, where $m = l + v_q(l!)$.
\end{lem}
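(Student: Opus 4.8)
The plan is to distinguish the two left-hand quantities from every $P^{h}_{r}$ with $r \ge ql$ by comparing their $q$-adic valuations $v_{q}(\cdot)$. Everything rests on one observation: by Legendre's formula,
\[
v_{q}\big((ql)!\big) = \sum_{i \ge 1}\Big\lfloor \frac{ql}{q^{i}}\Big\rfloor = l + \sum_{i \ge 1}\Big\lfloor \frac{l}{q^{i}}\Big\rfloor = l + v_{q}(l!) = m ,
\]
so the exponent $m$ appearing in the statement is exactly $v_{q}((ql)!)$. I will show that each left-hand quantity has $q$-adic valuation $m-1-v_{q}(l) < m$, whereas every $P^{h}_{r}$ with $r \ge ql$ has $q$-adic valuation at least $m$; the two sides therefore cannot coincide.

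First I would compute $v_{q}$ of $P^{q^{m}k+ql-1}_{ql-1}$ and of $P^{q^{m}(k+l)-1}_{ql-1}$. Each of them is a product of $ql-1$ consecutive integers: the factors of the first are $q^{m}k+j$ with $1 \le j \le ql-1$, and the factors of the second are $q^{m}(k+l)-j$ with $1 \le j \le ql-1$. Since $m \ge l$ and $l \le 2^{l-1} \le q^{l-1} \le q^{m-1}$, we have $ql-1 < q^{m}$; hence for any such $j$ and any $t \in \N$ the valuation $v_{q}(q^{m}t) = m + v_{q}(t)$ strictly exceeds $v_{q}(j)$, so that $v_{q}(q^{m}t \pm j) = v_{q}(j)$. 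Summing over $j = 1, \dots, ql-1$ shows that $v_{q}$ of either quantity equals $\sum_{j=1}^{ql-1} v_{q}(j) = v_{q}((ql-1)!)$, and from $(ql)! = (ql)\cdot (ql-1)!$ this equals $m - v_{q}(ql) = m - 1 - v_{q}(l)$, which is at most $m-1$.

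Next I would bound $v_{q}(P^{h}_{r})$ from below for $r \ge ql$. If $h < r$ then $P^{h}_{r}=0$, which cannot equal the positive left-hand quantities, so $h \ge r$ and $\binom{h}{r} = P^{h}_{r}/r!$ is a positive integer; hence
\[
v_{q}(P^{h}_{r}) = v_{q}(r!) + v_{q}\!\left(\binom{h}{r}\right) \ge v_{q}(r!) \ge v_{q}\big((ql)!\big) = m ,
\]
using that $v_{q}(n!)$ is non-decreasing in $n$ and $r \ge ql$. Combining this with the previous paragraph, each left-hand quantity has valuation $m-1-v_{q}(l) < m \le v_{q}(P^{h}_{r})$, so the two are unequal for every $r \ge ql$, which is exactly the assertion.

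The only step demanding genuine care is the ``carry-free'' inequality $ql-1 < q^{m}$: it is precisely what lets me read off $v_{q}(q^{m}t \pm j) = v_{q}(j)$ and thereby pin the valuation of the left-hand side to $v_{q}((ql-1)!)$, one unit below the threshold $m = v_{q}((ql)!)$ that $P^{h}_{r}$ reaches as soon as $r \ge ql$. Once that inequality is secured — it follows from $m = l + v_{q}(l!) \ge l$ together with the elementary bound $l \le q^{l-1}$ — the rest is routine bookkeeping with Legendre's formula.
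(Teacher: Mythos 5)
Your proof is correct and follows essentially the same route as the paper's: both arguments compare $q$-adic valuations, showing $v_q$ of each left-hand quantity equals $v_q((ql-1)!) < m$ while $v_q(P^h_r) \ge v_q((ql)!) = m$ for $r \ge ql$. Your version is in fact tighter where the paper is loose — you explicitly verify the carry-free inequality $ql-1 < q^m$ that justifies $v_q(q^m t \pm j) = v_q(j)$, a step the paper asserts without justification.
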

\begin{proof}
  It’s clear that for any $r\geq ql$, we have $v_q(P^h_r)\geq v_q(ql!)=l + \nu_q(l!)= m$ and $v_q(P^{q^mk + ql - 1}_{ql-1})= v_q(qmk + ql - 1) + v_q(q^mk + ql - 2) +\dots + v_q(q^mk + 1)$. Since, $v_q(x + y) = \min(v_q(x), v_q(y))$ in the case of $v_q(x)\neq v_q(y)$ and $v_q(q^mk)\geq m$ for all $j \leq qn - 1$, therefore $v_q(P^{q^mk + ql - 1}_{ql - 1}) = v_q(ql - 1) + v_q(ql - 2) +\dots + v_q(1) = v_q((ql - 1)(ql - 2)\dots (1)) = v_q((ql - 1)!) < v_q(ql!) = m$. Hence, $P^{q^mk + ql - 1}_{ql - 1}\neq P^h_r$ for all $r \geq ql$. The second part $P^{q^m(k+l) - 1}_{ql - 1}\neq P^h_r$ can be done in the same manner.
\end{proof}

\begin{lem}
  For any prime power $q^h \neq 3^1$, we have $P^{2q^h}_1 \neq P^k_r$ for all $r > 1$, where $h$ is a natural number.
\end{lem}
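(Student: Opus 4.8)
The plan is to argue by contradiction, exploiting that $P^{2q^{h}}_{1}=2q^{h}$ while, for $r>1$, the number $P^{k}_{r}=k(k-1)\cdots(k-r+1)$ is a product of $r\geq 2$ consecutive integers, each of which is $\geq 2$ (in this setting $k>r$, so the smallest factor $k-r+1\geq 2$; the borderline case $k=r$, where the factor $1$ occurs, reduces to the same situation after deleting that factor). So suppose $2q^{h}=k(k-1)\cdots(k-r+1)$ with $r\geq 2$; note also that $2q^{h}\geq 4$ since $h\geq 1$.

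First I would record the key divisibility observation: every factor $k-i$ divides the product, hence divides $2q^{h}$, so each $k-i$ is a divisor of $2q^{h}$. If $q=2$ this closes the case at once: among $r\geq 2$ consecutive integers there is an odd one, and here it is $\geq 3$, whereas the only odd divisor of $2q^{h}=2^{h+1}$ is $1$, a contradiction. So from now on $q$ is an odd prime, and every factor $k-i$ has the form $2^{a}q^{b}$ with $a\in\{0,1\}$ and $0\leq b\leq h$.

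The heart of the argument is then a short classification. Since $r\geq 2$, the factors include two consecutive integers $m$ and $m+1$, both $\geq 2$, both dividing $2q^{h}$, and coprime to each other. Checking the possible exponents, the only way two divisors of $2q^{h}$ can be coprime and both $\geq 2$ is that one of them equals $2$ and the other equals $q^{b}$ for some $1\leq b\leq h$. Being consecutive integers, $|2-q^{b}|=1$, which forces $q^{b}=3$, hence $q=3$, $b=1$, and the consecutive pair is exactly $\{2,3\}$.

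Finally I would eliminate $r\geq 3$: if $r\geq 3$ there are two \emph{distinct} pairs of consecutive factors, and by the previous step each of them would have to equal $\{2,3\}$, which is absurd. Therefore $r=2$, the two factors are $2$ and $3$, and $2q^{h}=P^{3}_{2}=6$, i.e.\ $q^{h}=3^{1}$, precisely the case excluded by hypothesis. This contradiction proves the lemma. I expect the only mildly delicate point to be the case analysis pinning down the coprime divisor pairs of $2q^{h}$; the rest is routine, modulo the small preliminary bookkeeping about the factor $1$ and the bound $2q^{h}\geq 4$.
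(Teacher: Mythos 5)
Your proof is correct and follows essentially the same route as the paper's: observe that every factor of $P^k_r$ divides $2q^h$, pick two consecutive (hence coprime) factors, and reduce to $|2-q^b|=1$, which forces $q^b=3$. Your version is in fact more carefully argued than the paper's (you handle the stray factor $1$, the coprime-divisor classification, and the elimination of $r\geq 3$ explicitly, where the paper is terse and slightly sloppy), but the underlying idea is identical.
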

\begin{proof}
  Let $q^h \neq 3^1$ and $r > 1$. Suppose by contrary that, $P^{2q^h}_1=P^k_r$ . If $q = 2$, then there exist $a, b$ with $a > b$ such that $2a$ and $2b$ are consecutive integers, which is impossible unless $b = 0$. Therefore, the only solution is $P^2_1 \neq P^k_r$ for all $r > 1$, which is a contradiction. If $q\neq2$, then there exist $a, b$ such that $2q^a$ and $q^b$ are consecutive integers greater than $1$. Consequently, $2q^a - q^b = \pm 1$. Let $c = \min(a, b)$, therefore $qc(2q^{a-c} - q^{b-c}) = \pm 1$ which implies that $q^c = 1$ and hence $c = 0$. Therefore, we have $2q^a - 1 = \pm 1$ or $2 - q^b = \pm 1$. So, the only solution will exist if $2 - q^b = -1$, that is, when $q = 3$ and $b = 1$. Which is a contradiction and the proof is completed.
\end{proof}

\section{A lower bound for $|E(G(N))|$}
 In this section we introduce a lower bound of the number of edges of a maximal permutation graph with $n$ vertices. Define $S_1(n) := \{P^k_i : i = k - m_k, ..., k - 1, k = 3, ..., n\}$, where $m_k$ is defined to be the integer satisfies $m_k! < k \leq (m_k + 1)!$.
$S_2(n):= \{P^{q+1}_s : q \ be \ a \ prime \ number, \ s + \sqrt(s + 1) < q < n \ and \ s\geq3\}$.\\
$S_3(n):= \{P^{q+2}_s : q \ be\  a \  prime \ number, \ 4s < q < n - 1 \ and \ s\geq3\}$. \\
$S_4(n):= \{P^{q^mk+ql-1}_{ql-1} : q \ be \ a \ prime \ number \ and \ l, k \in\N \ such \ that \ q^mk + ql - 1 < n, \ ql < n, \ where \ m := l + v_q(l!)\}$.\\
$S_5(n):= \{P^{q^m(k+l)-1}_{ql-1} : q \ be \ a \ prime \ number \ and \ l, k\in\N \ such \ that \ q^m(k + l) - 1 < n, \ ql < n, \ where \ m := l + v_q(l!)\}$.\\
$S_6(n):= \{P^{2q^h}_1 : q \ be \ a \ prime \ number, \ q^h\neq 3^1 \ and \ h\in\N\} \cup P^2_1$.\\
\begin{thm}
  The number of edges of a maximal permutation graph with $n$ vertices is greater than 
  $ nm - m! - 4 - \sum\limits_{k=3}^{m-1}  k!
  + \sum\limits_{s=2}^{\lfloor\frac{2n+3-\sqrt{4n+18}}{2}\rfloor} \pi(n - 1) - \pi(s +\sqrt{s + 1}) +|S_4(n)\cup S_5(n)| +\sum\limits_{s=2}^{\lfloor\frac{n-1}{4}\rfloor}\pi(n - 1) - \pi(4s) + \sum\limits_{q \ is \ prime}^{q\leq\lfloor\frac{n}{2}\rfloor} \lfloor\log_q(\frac{n}{2})\rfloor - \delta(S_1(n), \dots, S_6(n))$, where $\delta(S_1(n), \dots, S_6(n))$ means the sum of (repetitions-1) of each set label.
\end{thm}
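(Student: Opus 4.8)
The plan is to realize the claimed quantity as a count of \emph{distinct} edge labels that are \emph{forced} to appear in any maximal permutation graph on $n$ vertices, and then subtract off the over-counting. Concretely, fix a maximal permutation graph $G$ with a permutation labeling $f$. Every candidate edge $x_ix_j$ that is \emph{not} present in $G$ must, by maximality, already have its induced label $P^{\max}_{\min}$ realized by some edge that \emph{is} present; otherwise we could add it. So the number of edges of $G$ is at least the number of those ``potential'' labels that Lemmas~2.1--2.5 certify to be \emph{distinct} from all labels in a prescribed comparison range, because a label that cannot collide with anything else must be occupied by an actual edge. The six sets $S_1(n),\dots,S_6(n)$ are precisely the families of such provably-distinct labels coming from the five lemmas (with $S_1$ from Lemma~2.1, $S_2$ from Lemma~2.2, $S_3$ from Lemma~2.3, $S_4$ and $S_5$ from Lemma~2.4, and $S_6$ from Lemma~2.5). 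Hence $|E(G)| \geq \bigl|\,S_1(n)\cup\cdots\cup S_6(n)\,\bigr|$, and the theorem amounts to computing this cardinality.

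First I would verify the applicability of each lemma inside each $S_i(n)$: for $S_1$, the constraint $m_k! < k \le (m_k+1)!$ makes Lemma~2.1 apply with $m=m_k$, giving $m_k$ forced labels for each $k=3,\dots,n$ (those $P^k_i$ with $i=k-m_k,\dots,k-1$ are all distinct and distinct from everything of smaller ``top''); for $S_2$ and $S_3$ the side conditions $s+\sqrt{s+1}<q<n$ and $4s<q<n-1$ are exactly the hypotheses of Lemmas~2.2 and~2.3; for $S_4,S_5$ the condition $m=l+v_q(l!)$ together with $q^mk+ql-1<n$ etc.\ is exactly the hypothesis of Lemma~2.4; for $S_6$ the condition $q^h\neq 3^1$ is the hypothesis of Lemma~2.5, and the extra $P^2_1$ is the label of the forced edge $x_1x_2$ (which is never in conflict). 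Next I would evaluate $|S_1(n)|$: summing $m_k$ over $k=3,\dots,n$ and using $m_k!<k\le(m_k+1)!$ to count how many $k$ share each value of $m_k$, one telescopes to $nm-m!-\sum_{k=3}^{m-1}k! - (\text{a small constant})$, where $m$ is the largest value $m_n$; the constant $4$ absorbs the low-order terms $k=3$. Then $|S_2(n)|$ is a double count over admissible $(s,q)$: for fixed $s$ the number of primes $q$ with $s+\sqrt{s+1}<q<n$ is $\pi(n-1)-\pi(s+\sqrt{s+1})$, and $s$ ranges up to the largest integer with $s+\sqrt{s+1}<n$, which is $\lfloor(2n+3-\sqrt{4n+18})/2\rfloor$; similarly $|S_3(n)|=\sum_{s=2}^{\lfloor(n-1)/4\rfloor}\bigl(\pi(n-1)-\pi(4s)\bigr)$. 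Finally $|S_6(n)|$ counts prime powers $q^h\le n/2$ with $q^h\neq 3$, plus the single label $P^2_1$; grouping by the prime $q\le\lfloor n/2\rfloor$ the exponents run $1\le h\le\lfloor\log_q(n/2)\rfloor$, giving $\sum_{q\text{ prime}}\lfloor\log_q(n/2)\rfloor$ (the $+P^2_1$ cancels the $q^h=3$ exclusion on the nose, since $3=2\cdot\tfrac32$ is not of the form $2q^h$ anyway — I would double-check this bookkeeping carefully).

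The terms that resist a closed form are $S_4(n)$ and $S_5(n)$, whose cardinalities depend on prime-power arithmetic in a way that does not collapse nicely; these are simply left as the symbol $|S_4(n)\cup S_5(n)|$ in the statement, so no evaluation is needed, only the verification that Lemma~2.4 indeed certifies every member as a forced, distinct label. The last ingredient is the correction term: the six sets are not disjoint — a given integer may be expressible simultaneously as, say, a $P^{q+1}_s$ and a $P^{q'+2}_{s'}$, or may repeat within a single $S_i(n)$ under different parameter choices — so the naive sum $\sum_i|S_i(n)|$ over-counts. Subtracting $\delta(S_1(n),\dots,S_6(n))$, defined as the total number of repetitions beyond the first occurrence of each label, converts the sum of cardinalities into the cardinality of the union. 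Assembling: $|E(G)|\ge|S_1\cup\cdots\cup S_6| = \sum_{i=1}^{6}|S_i(n)| - \delta(S_1(n),\dots,S_6(n))$, and substituting the evaluations of $|S_1|,|S_2|,|S_3|,|S_6|$ computed above (and leaving $|S_4\cup S_5|$ symbolic) yields exactly the claimed bound; the strict inequality comes from the forced edge $x_1x_3$ or any one edge not accounted for among the label families, which is why the bound is stated as ``greater than'' rather than ``at least''.

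The step I expect to be the main obstacle is the exact arithmetic of $|S_1(n)|$ — pinning down the constant $-4$ and the precise upper limit $m-1$ in $\sum_{k=3}^{m-1}k!$ requires being scrupulous about the boundary cases $k=3,\dots$ and about whether $m$ denotes $m_n$ or $m_n+1$ — closely followed by making the definition of $\delta$ rigorous enough that ``repetitions$-1$ of each set label'' genuinely equals $\sum_i|S_i(n)| - |\bigcup_i S_i(n)|$ and not something off by the number of \emph{labels} that repeat. Everything else is a routine (if tedious) translation of the five lemmas into counting statements about primes, prime powers, and $\pi(\cdot)$.
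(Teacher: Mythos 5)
Your proposal follows essentially the same route as the paper: bound $|E(G)|$ below by $|S_1(n)\cup\cdots\cup S_6(n)|=\sum_i|S_i(n)|-\delta(S_1(n),\dots,S_6(n))$, evaluate $|S_1|,|S_2|,|S_3|,|S_6|$ by the same grouping arguments (summing $m_k$ by its level sets, double-counting over admissible $(s,q)$, and counting prime powers $2q^h\le n$ with the $q=2$/$q=3$ corrections cancelling), while leaving $|S_4(n)\cup S_5(n)|$ symbolic. If anything, your maximality argument for why each provably collision-free label forces an actual edge is spelled out more explicitly than in the paper, which simply asserts that step.
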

\begin{proof}
 Using the above notation and the lemmas that have been introduced in the previous section, one can see that the number of edges in the maximal permutation graph G with n vertices is greater that $|\cup^{m}_{n =1} S_i(n)| - \delta(S1(n), \dots, S6(n))$. Let us try to count the number of elements in $S_1$, $S_2$, $S_3$, $S_6$. Let us partition the numbers in $S_1$ with respect to

\begin{eqnarray*}
 |S_1(n)| &=& \sum\limits_{k=3}^{n} \sum\limits_{i=k-m_k}^{k-1} 1\\
          &=& \sum\limits_{3}^{n}  m_k\\
          &=& \sum\limits_{k=3}^{3\!}2+\sum\limits_{k=3\!+1}^{4\!}3+ \dots+\sum\limits_{k=(m_n-1)\!}^{4\!}m_n-1+\sum\limits_{m_n\!+1}^{n}m_n \\
          &=& 2(3\! - 2\!) + 3(4\! - 3\!)+  \dots+ (mn - 1)(mn\! - (mn - 1)\!) + mn(n - mn\!) \\
          &=& nm - m\! - 4 -\sum\limits_{k=3}^{m-1}
\end{eqnarray*}
For a fixed $s$ the number of elements will be $\pi(n - 1) - \pi(s +\sqrt{s + 1})$ and the number of possible $s$ is when the conditions of the prims fill. In that case $n = s - 2 + \sqrt{s + 1}$. Consequently, $(n+2)^2-2s(n+2)+s^2 = s+1$. Therefore, $s^2-(2n+5)s+(n+2)^2-1$. Consequently, $s =\frac{2n+5\pm\sqrt{4n^2+20n+25-4(n^2+4n+2)+1}}{2} = \frac{2n+3\pm\sqrt{4n+18}}{2}$. Therefore, $s=\frac{2n+3-\sqrt{4n+18}}{2}$. Therefore, $|s_2(n)| =\sum\limits_{s=2}^{\lfloor \frac{2n+3-\sqrt{4n+18}}{2}\rfloor}\pi(n - 1) - \pi(s +\sqrt{s + 1})$.\\

Now, consider the number of elements constructed by lemma 3.3 $(s_3)$. For a fixed $s$ the number of elements will be $\pi(n - 1) - \pi(4s)$ and the number of available values of $s$ is from $s = 3$ until, when the conditions of the prims fill. In that case $n - 2 = 4s + 1$
therefore, $s = \lfloor\frac{n-3}{4}\rfloor$. Therefore, $|s3(n)| = \sum\limits_{s=2}^{\lfloor\frac{n-3}{4}\rfloor}\pi(n - 1) - \pi(4s)$.\\

Now, consider the number of elements constructed by lemma 3.5 $(s_6)$. For fixed $q$. If $2q^h\leq n$ then, $h\leq\lfloor\log_q \frac{n}{2}\rfloor$. Therefore, the number of elements constructed by $q$ is $\lfloor\log_q \frac{n}{2}\rfloor$  if $q \neq 3$, $\lfloor\log_q \frac{n}{2}\rfloor-1$ if $q = 3$ and $\lfloor\log_q \frac{n}{2}\rfloor + 1$ if $q = 2$ considered $h$ can be zero if $q = 2$ and can not be $1$ if $q = 3$. So, in the total we can say that $|s_6(n)| = \sum\limits_{q \ is \ prime}^{q\leq\lfloor\frac{n}{2}\rfloor}\lfloor\log_q \frac{n}{2}\rfloor$.\\

In total without considering that the elements constructed by a lemma can be constructed by another lemma we have that. The maximal number of edges of a permutation graph with n vertices is greater than
$|E(G(n))| > nm - m! - 4 - \sum\limits_{k=3}^{m-1}  k!
 +\sum\limits_{s=2}^{\lfloor \frac{2n+3-\sqrt{4n+18}}{2}\rfloor} \pi(n - 1) - \pi(s +\sqrt{s + 1})+ |S_4(n)\cup S_5(n)| +\sum\limits_{s=2}^{\lfloor\frac{n-1}{4}\rfloor} \pi(n-1)-\pi(4s)+ \sum\limits_{q \ is \ prime}^{q\leq\frac{n}{2}}\lfloor\log_q\frac{n}{2} \rfloor-\delta(S1(n), ..., S6(n))$.
\end{proof}

\begin{rem}
  For large values of $n$ the lower bound can be modified by using good approximations of the
function $\pi(n)$.
\end{rem}
\begin{thm}
   $S_{1} \cap S_{i} = \phi$  for all $i = 2, 3, 4, 5, 6$.
\end{thm}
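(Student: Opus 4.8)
The plan is to pit two opposite features of the sets against each other. Every element of $S_1$ is a \emph{long} product of consecutive integers: if $v=P^{k}_{i}\in S_1$ then $k-i\le m_k$, so $(k-i)!\le m_k!<k$, whence
\[
(k-1)!\;<\;\frac{k!}{(k-i)!}\;=\;v\;\le\;k!.
\]
By contrast, Lemmas~2.2--2.5 were tailored so that the labels in $S_2,\dots,S_6$ are \emph{not} long products: Lemma~2.2 gives $P^{q+1}_{s}\ne P^{N}_{r}$ whenever $r>s$, Lemma~2.3 the same for $P^{q+2}_{s}$, Lemma~2.4 that $P^{q^{m}k+ql-1}_{ql-1}$ and $P^{q^{m}(k+l)-1}_{ql-1}$ are never equal to $P^{N}_{r}$ with $r\ge ql$, and Lemma~2.5 that $P^{2q^{h}}_{1}\ne P^{N}_{r}$ for $r>1$. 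So I would run one uniform argument for each $i\in\{2,\dots,6\}$; the element $P^{2}_{1}=2$ of $S_6$ needs no attention, since the least element of $S_1$ is $P^{3}_{1}=3$.

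Suppose $v\in S_1\cap S_i$. Write $v=P^{k}_{i}$ for its $S_1$-presentation ($3\le k\le n$, $k-m_k\le i\le k-1$) and $v=P^{N}_{r}$ for its defining $S_i$-presentation, so that $(N,r)$ is $(q+1,s)$, $(q+2,s)$, $(q^{m}k+ql-1,\,ql-1)$, $(q^{m}(k+l)-1,\,ql-1)$ or $(2q^{h},1)$. The relevant lemma forbids $v$ from being a product of more than $r$ consecutive integers (for $S_6$ it forbids more than one, which already gives $i=1=r$), hence $i\le r$ in all cases. Split on whether $i<r$ or $i=r$. Since $P^{a}_{b}$ is strictly monotone in each index over the ranges occurring here (all factors positive because $i\le k-1$ and, as one checks from the definitions, $r\le N-2$), the inequality $i<r$ together with $k\le N$ would give $v=P^{k}_{i}\le P^{N}_{i}<P^{N}_{r}=v$; so $k\ge N+1$, i.e. $k-1\ge N$, and then the display gives $v>(k-1)!\ge N!\ge P^{N}_{r}=v$, a contradiction. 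If instead $i=r$, equality of two products of $i$ consecutive integers forces $k=N$, so the $S_i$-presentation is itself an $S_1$-presentation and membership in $S_1$ gives $m_N\ge N-r=:d$, hence $d!\le m_N!<N$.

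It remains, in the case $i=r$, to contradict $d!<N$ by using the arithmetic hypothesis of $S_i$ to bound $N$ above in terms of $d$. For $S_2$, from $d=q+1-s$ and $q>s+\sqrt{s+1}$ one gets $N=q+1<d^{2}-d$ and $d\ge 4$; for $S_3$, from $d=q+2-s$ and $q>4s$ one gets $N<\tfrac{4}{3}d$ and $d\ge 12$; for $S_6$, $d=2q^{h}-1\ge3$ and $N=d+1$. In each of these ranges $d!\ge N$, contradicting $d!<N$; this settles $i=2,3,6$.

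The delicate endpoint is $S_4$ and $S_5$ in the subcase $i=r=ql-1$, where both $N$ and the number of factors move. Here I would first establish the size bound $N<2A$, with $A:=N-r$: this follows from $m=l+v_q(l!)\ge l$ together with the elementary inequality $ql\le q^{l}$ (valid for $q\ge2$, $l\ge1$), which forces $r=ql-1<q^{l}\le q^{m}\le A$ in both $S_4$ (where $A=q^{m}k$) and $S_5$ (where $A=q^{m}(k+l)-ql$). Since $A$ is a positive multiple of $q$ we have $A\ge2$, so the $S_1$-requirement $m_N\ge A$ yields $A!\le m_N!<N<2A$, i.e. $A!<2A$, which holds only for $A\le 2$; the remaining case $A=2$ pins the data down to $q=2$, $l=k=1$, the single datum to be inspected directly. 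Everything else is the uniform ``long product versus short product'' dichotomy; I expect this bound $N<2A$ to be the main obstacle, since it is the one point where the interplay between the exponential growth of $q^{m}$, the linear size of $ql$, and the very slow growth of $m_N$ has to be made quantitative.
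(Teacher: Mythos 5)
Your dichotomy (every element of $S_1$ exceeds $(k-1)!$ because it is a product of at least $k-m_k$ consecutive integers, while Lemmas 2.2--2.5 cap the number of factors in any alternative presentation of an element of $S_2,\dots,S_6$) is a genuinely different and considerably more rigorous route than the paper's, which compares the subscript ranges ($s$ small versus $i\ge k-m_k$) and the magnitudes only loosely; your treatment of $i=2,3,6$ is complete and correct. However, the one case you defer to ``direct inspection'' is exactly where the argument --- and in fact the statement itself --- breaks down. Take $q=2$, $l=k=1$ in $S_4$: then $m=l+v_2(1!)=1$ and the element is $P^{2^1\cdot 1+2\cdot 1-1}_{2\cdot 1-1}=P^3_1=3$, with the membership conditions $3<n$ and $2<n$ satisfied for every $n\ge 4$. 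On the other hand $m_3=2$, so $S_1(n)$ contains $P^3_i$ for $i=1,2$, in particular $P^3_1=3$. Hence $3\in S_1(n)\cap S_4(n)$ for all $n\ge 4$, and the same parameters place $3=P^{2^1(1+1)-1}_{1}$ in $S_5(n)$ as well. The inspection you postpone therefore does not close the proof; it produces a counterexample.

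This is not a defect of your method: your reduction shows the claim holds \emph{except} for this single label, so the theorem is salvageable precisely by excluding $ql=2$ (equivalently, the label $3$) from $S_4$ and $S_5$ --- a restriction the paper itself tacitly assumes, since its own argument for this case begins ``$q^h>ql-1$ for all $ql>2$'' and never addresses $ql=2$. Two smaller points you should make explicit rather than leave silent: in the case $i<r$ you need $r\le N-1$ in every family so that the extra factors of $P^N_r$ over $P^N_i$ are all at least $2$ (true in each case, e.g.\ $s<q<q+1=N$ for $S_2$ and $ql-1<q^mk+ql-1$ for $S_4$); and for $S_2$ the paper's definition has $s\ge 3$ while Lemma 2.2 allows $s\ge 2$, so your bound $d\ge 4$ should be justified from whichever range you actually intend to use (with $s=2$ you still get $d\ge 3$, which suffices).
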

\begin{proof}
  We will prove that fact by showing that there are no intersections between $S_{1}(n)$
and other sets. Its clear that there is no intersection with $S_{6}$. For $S_{2}$, we have $s = \lfloor \frac{2n+3-\sqrt{4n+18}}{2}   \rfloor$.
Therefore, $s\leq n-\sqrt{n}-(\sqrt{n}-2)$. In fact, $m!> m^{2}$ for all $m\geq 4$. Thus, we have $\sqrt{n}> m$. Also, $\sqrt{n}> 2$ for all $n> 4.$ Consequently, $s< n-m-(\sqrt{n}-2)< n-m.$ Note also the first possible value for $n$ is at $n = 6$ in $S_{2}$. So, there is no intersection with $S_{2}$.
The sanme proof also works for $s_{3}$. Now consider $S_{4}$.  We can say that there is no
intersection with $S_{1}$. In fact $q^{h} > ql - 1$ for all $ql > 2$ if $ql - 1 \geq \frac{n}{2}$
 then $q^{h} + ql - 1 > n$.
. Also, $2q^{h} - 1 > n$.
\end{proof}

 \section{A lower bound for $|E(G(N))|$}
 In this section we introduce a new upper bound of the number of edges of a maximal
permutation graph. Let $G(n)$ be a maximal permutation graph with $n$ vertices.
Define $W_{h}(n) := \{k : P^{k}_1=P^{i}_h, k, i = 3, . . . , n\}.$
\begin{thm}
  $|W_{h}(n)| = i_{h} - h$, where $i_{h}$ is the largest integer with $i_{h} > h$ and
$n - i_{h}(i_{h} - 1)...(i_{h} - h + 1)\geq 0$.
\end{thm}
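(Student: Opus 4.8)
The plan is to strip the definition of $W_{h}(n)$ down to a plain counting problem about the permutation numbers $P^{i}_{h}$. First I would recall that $P^{k}_{1}=k$, so that $k\in W_{h}(n)$ exactly when $k\in\{3,\dots,n\}$ and $k=P^{i}_{h}$ for some admissible $i$. Since $P^{i}_{h}$ is the label attached to the edge $hi$, only indices with $h<i$ are relevant (indeed $P^{i}_{h}$ is a positive integer only once $i\ge h$, and $i=h$ would correspond to a loop, not an edge); combined with the nominal range $i\in\{3,\dots,n\}$ this gives, assuming $h\ge 2$, that $i$ runs over the integers with $h<i\le n$. Hence $W_{h}(n)=\{\,P^{i}_{h}\;:\;h<i\le n,\ 3\le P^{i}_{h}\le n\,\}$.

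Next I would show that the auxiliary side conditions are automatic, so that $P^{i}_{h}\le n$ is the only binding one. For $i>h\ge 2$ every factor of $P^{i}_{h}=i(i-1)\cdots(i-h+1)$ is at least $1$, so $P^{i}_{h}\ge i\cdot 1^{\,h-1}=i$; thus $P^{i}_{h}\le n$ forces $i\le n$ and the bound $i\le n$ may be dropped. Similarly $P^{i}_{h}\ge P^{h+1}_{h}=(h+1)!\ge 3!=6\ge 3$, so $P^{i}_{h}\ge 3$ is automatic, and $i>h\ge 2$ already gives $i\ge 3$. Therefore $W_{h}(n)=\{\,P^{i}_{h}\;:\;i\in\mathbb{Z},\ i>h,\ P^{i}_{h}\le n\,\}$.

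The key step is monotonicity. For integers $i\ge h$ one computes $\dfrac{P^{i+1}_{h}}{P^{i}_{h}}=\dfrac{i+1}{i+1-h}>1$, so $i\mapsto P^{i}_{h}$ is strictly increasing on $\{h,h+1,h+2,\dots\}$; in particular it is injective there. Consequently the condition ``$i>h$ and $P^{i}_{h}\le n$'' is equivalent to $h<i\le i_{h}$, where $i_{h}$ is by definition the largest integer exceeding $h$ with $n-i_{h}(i_{h}-1)\cdots(i_{h}-h+1)\ge 0$ (and if no integer greater than $h$ satisfies $P^{i}_{h}\le n$, i.e.\ $n<(h+1)!$, then $W_{h}(n)=\emptyset$ and we read $i_{h}=h$). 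Thus $i\mapsto P^{i}_{h}$ restricts to a bijection from $\{h+1,h+2,\dots,i_{h}\}$ onto $W_{h}(n)$, and counting the domain gives $|W_{h}(n)|=i_{h}-h$.

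The argument is essentially bookkeeping, and the one place that needs genuine care is the reduction carried out in the first two paragraphs: verifying that the four nominal constraints $3\le k\le n$ and $3\le i\le n$, intersected with the edge condition $i>h$, really collapse to the single interval $h<i\le i_{h}$ when $h\ge 2$ — and noting that $h\ge 2$ is essential, since for $h=1$ the effective lower cutoff is $i\ge 3$ rather than $i\ge 2$, which would throw the count off by one. Once that reduction is in place, strict monotonicity of $P^{i}_{h}$ in $i$ does the rest.
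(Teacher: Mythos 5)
Your proof is correct and follows essentially the same route as the paper's: reduce membership in $W_h(n)$ to the condition $h<i\le i_h$ on the index $i$ and count that interval. You are more careful than the paper in two places it leaves implicit — the strict monotonicity of $i\mapsto P^i_h$ (which makes the correspondence a bijection) and the check that the nominal bounds $3\le k,i\le n$ collapse to $P^i_h\le n$ when $h\ge 2$ — but the underlying argument is the same.
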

\begin{proof}
 We need to determine the value of $i: h< i < n$ and $i(i - 1)...( - h + 1) \leq n$.
 and $(i+1)i...(i-h+2) > n$. From the first inequality we can look for the largest integer that
makes $f(x) = x(x - 1)...(x - h + 1) \leq n$ or the largest integer $x$ with $h < x < n$
and $0 \leq n - x(x - 1)...(x - h + 1) < n $. Let $i_{h}$ be a solution to the system. Then
$W_{h}(n) = \{i_{h}, i_{h} -1, ..., h + 1\}$. Therefore, $|W_{h}(n)| = i_{h} - h$ .
\end{proof}
\begin{rem}
 The value of $i_{h}$ can be obtained by plotting the function $g(x) := n-x(x-1)...(x - h + 1) $ and finding the value of $i_{h}$ with the above required conditions.
\end{rem}

  \begin{thm}
    $|E(G(N))|\leq \frac{n(n-1)}{2}- \displaystyle \sum_{h=2}^{m_{n}-1}(i_{h} - h)$
  \end{thm}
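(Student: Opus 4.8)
The plan is to start from the complete graph $K_n$ on the vertex set $\{1,2,\dots,n\}$ with all $\binom{n}{2}$ edges, where each edge $ij$ with $i<j$ carries the label $P_i^j$, and then account for the edges that \emph{must} be deleted in order to make the induced edge labeling injective. The key observation is that an edge $1k$ (labeled $P_1^k=k$) and an edge $hi$ (labeled $P_h^i$) collide precisely when $k = P_h^i$, and the set $W_h(n)$ introduced above records, for a fixed "base" $h$, exactly which edges $1k$ are forced into a collision with some edge whose smaller endpoint is $h$. Since for each such $h$ one of the two colliding edges has to be removed, and by Theorem~3.6 we have $|W_h(n)| = i_h - h$ collisions generated with base $h$, summing over the admissible range of $h$ gives a lower bound on the number of removed edges, hence an upper bound on $|E(G(n))|$.

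First I would fix the range of $h$. The relevant constraint is that $P_h^i$ with $h<i\le n$ can actually equal some $k\le n$, which forces $i(i-1)\cdots(i-h+1)\le n$ to have a solution $i>h$; this is exactly the condition defining $i_h$ in Theorem~3.6, and it fails once $h$ is so large that even the minimal product $(h+1)h\cdots 2 = (h+1)!$ exceeds $n$, i.e. for $h \ge m_n$. So the admissible bases are $h = 2,\dots, m_n-1$, which is precisely the summation range in the statement. Next I would argue that the collisions counted for distinct values of $h$ are genuinely distinct edge-deletions: the edge removed in each case is the edge $1k$ with $k = P_h^i \in W_h(n)$, and I would check that the sets $W_h(n)$ are pairwise disjoint (an integer $k$ of the form $P_h^i$ for one choice $(h,i)$ cannot simultaneously be $P_{h'}^{i'}$ for $h'\neq h$ in the relevant range, or if it can, one simply counts it once — this needs a short argument, possibly invoking the prime-divisibility lemmas of Section~2). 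Then the total number of forced deletions is at least $\sum_{h=2}^{m_n-1}(i_h-h) = \sum_{h=2}^{m_n-1}|W_h(n)|$, and subtracting from $\binom{n}{2} = \frac{n(n-1)}{2}$ yields the claimed bound.

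The main obstacle I expect is the disjointness/no-double-counting step: one must make sure that deleting, for each $h$, the edges $\{1k : k \in W_h(n)\}$ does not delete the same edge twice and that each deleted edge really is \emph{necessary} (i.e. its removal is forced, not optional). Necessity is clear for any single collision, but if a label $k$ collides with edges at two different bases $h_1$ and $h_2$, deleting the one edge $1k$ resolves both collisions at once, so naive summation could overcount; establishing $W_{h_1}(n)\cap W_{h_2}(n)=\emptyset$ (or adjusting the count otherwise) is where the real work lies. A clean way to handle this is to observe that $k\in W_h(n)$ forces $k = i(i-1)\cdots(i-h+1)$ for the specific $i=i_h$ and that these values lie in disjoint ranges as $h$ varies, so the union is in fact disjoint. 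Once that is in hand, the inequality $|E(G(n))| \le \frac{n(n-1)}{2} - \sum_{h=2}^{m_n-1}(i_h-h)$ follows immediately by counting deleted edges, completing the proof.
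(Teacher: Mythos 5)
Your overall strategy coincides with the paper's: start from $K_n$, note that each $k\in W_h(n)$ pairs the edge $(1,k)$ with a unique edge $(h,i)$ carrying the same label so that one of the two must be deleted, invoke Theorem~4.1 for $|W_h(n)|=i_h-h$, and sum over $h=2,\dots,m_n-1$. (The paper's proof is exactly this one-liner and does not even acknowledge the double-counting issue you raise.) The gap is in the step you yourself single out as ``where the real work lies'': the sets $W_h(n)$ are \emph{not} pairwise disjoint, so the disjointness argument you propose cannot be carried out. For example $P_3^6=6\cdot5\cdot4=120=5\cdot4\cdot3\cdot2=P_4^5$, so $120\in W_3(n)\cap W_4(n)$ whenever $n\ge 120$, and for $n\ge 121$ both indices $h=3$ and $h=4$ lie in the summation range. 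Your supporting claim that $k\in W_h(n)$ forces $i=i_h$ is also incorrect ($i$ ranges over all of $h+1,\dots,i_h$), and your fallback of ``counting $k$ once'' would only yield the weaker bound $\tfrac{n(n-1)}{2}-\bigl|\bigcup_h W_h(n)\bigr|$, not the stated inequality.

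What actually rescues the count is that overlaps must be kept \emph{with multiplicity}, not eliminated. If $k$ belongs to $W_{h_1}(n),\dots,W_{h_t}(n)$, then the $t+1$ edges $(1,k),(h_1,i_1),\dots,(h_t,i_t)$ are pairwise distinct (their smaller endpoints $1,h_1,\dots,h_t$ are distinct since each $i_j>h_j$) and all carry the label $k$, so any valid labeling forces the deletion of $t$ of them --- which is exactly the contribution of $k$ to $\sum_h |W_h(n)|$. Since edges deleted on account of different labels $k$ are necessarily different edges, $\sum_{h=2}^{m_n-1}|W_h(n)|=\sum_{h=2}^{m_n-1}(i_h-h)$ is a genuine lower bound on the number of forced deletions, and the theorem follows. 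With this accounting replacing your disjointness step the argument closes; as written, it does not.
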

\begin{proof}
  For any fixed $h$ and any element $k \in W_{h}(n)$ there is a unique $i$ such that $P^{k}_1=P^{i}_h$. Therefore, one the two edges $(k1)$ and $(ih)$ must be deleted and the result follows form
Theorem 4.1.
\end{proof}



\end{document}